\theoremstyle{plain}
\newtheorem{thm}{Theorem}
\newtheorem{prop}[thm]{Proposition}
\newtheorem{cor}[thm]{Corollary}
\newtheorem{problem}[thm]{Problem}
\theoremstyle{definition}
\newtheorem{example}[thm]{Example}
\theoremstyle{definition}
\begin{document}
\title[The Data Singular and the Data Isotropic Loci for Affine Cones]{\bf
The Data Singular and the Data Isotropic Loci for Affine Cones}

\author{Emil Horobe\c{t}}
\address{\textit{Address}: Eindhoven University of Technology, Department of Mathematics  and Computer Science, Eindhoven, The Netherlands.}
\email{e.horobet@tue.nl}

\subjclass[2010]{14N10, 41A65, 55R80.}
\keywords{Euclidean distance degree, Data Singular locus, Data Isotropic locus, ED Discriminant}

\maketitle
\begin{abstract}
The generic number of critical points of the Euclidean distance function from a data point to a variety is called the Euclidean distance degree. The two special loci of the data points where the number of critical points is smaller then the ED degree are called the Euclidean Distance \textit{Data Singular Locus} and the Euclidean Distance \textit{Data Isotropic Locus}. In this article we present connections between these two special loci of an affine cone and its dual cone.
\end{abstract}

\section{Introduction}

Models in science are often expressed as real solution sets of systems of polynomial equations, namely real algebraic varieties. One of the most fundamental optimization problems that can be formulated on such sets is the following: given a real algebraic variety and given a general data point of the ambient space, minimize the Euclidean distance from the given data point to the variety.

In order to solve this problem algebraically we examine the critical points of the squared Euclidean distance function. The number of such critical points expresses the algebraic degree of the complexity of writing the optimal
solution to the distance minimization problem and it is called the \textit{Euclidean Distance Degree.}
This optimization problem arises in a wide range of applications, such as low rank approximations (Example~\ref{Ex:Determinant}), control theory (Example~\ref{Ex:Hurwitz}), formation control (Example~\ref{Ex:Cayley-Menger}), algebraic statistics (Example~\ref{Ex:Cayley cubic}) and multiview geometry (Example~\ref{Ex:Essential}).

For a general data point $u$ the number of complex critical points is constant while, the number of real critical points is typically not constant for all general $u$. For example, if one of the critical points has a multiplicity, then the number of real critical points typically changes, this locus is called the \textit{ED-discriminant} (or classically \textit{focal loci}) and it was studied in \cite{CT00,DHOST13,GKZ,JP14,T98}.

In this article we want to discuss the locus (different from the ED discriminant) of exceptional data points $u$ for which the number of complex critical points is smaller then the ED degree. There are three ways in which we can have different number of critical points than expected. The first reason is because a critical point may wander off into the singular locus of the variety. The study of this special locus was proposed by Bernd Sturmfels, first examples were developed by the authors of \cite{DLT15} and it was named \textit{ED data singular locus.} In a similar fashion, the second case is when a critical point becomes isotropic with respect to the Euclidean inner product; this locus will be called \textit{ED data isotropic locus}. In these two cases the number of critical points is smaller then the ED degree. Finally, a data point can have infinitely many critical points, but this phenomenon is apparently recorded by the ED discriminant, so we do not deal with it in this article. A classical example would be that there are infinitely many critical rank $2$ approximations of a matrix with two identical singular values.

In this article we aim to describe the data singular and the data isotropic loci of affine cones.

\textbf{Acknowledgments.}
The author is grateful to Jan Draisma for his support and help and to Bernd Sturmfels for the theoretical insight. The author was supported by the NWO Free Competition grant \textit{Tensors of bounded rank}.

\section{The special loci of data points}

In order to find the critical points algebraically, we consider $X$ to be a variety in $\mathbb{C}^n$ and we examine
all complex critical points of the complexified distance, induced by the standard symmetric bilinear form,
\[d_u(x)=(u-x|u-x)=\sum_{i=1}^n (u_i-x_i)^2,\]
with $x\in X^{\mathrm{reg}}$, where $X^{\mathrm{reg}}$ denotes the locus of regular points of $X$, so we only allow those critical points that are non-singular.
Let $X\subseteq \mathbb{C}^n$ be an irreducible algebraic variety of codimension $c$ with defining radical ideal $I$. If $x\in X^{\mathrm{reg}}$ is a critical point of $d_u$, then the following holds: $u-x\perp T_{x}X$. This later condition can be formulated as $x\in X^{\mathrm{reg}}$ is a critical point of $d_u$ if and only if all the $(c+1)\times (c+1)$ minors of the matrix
\[
\left(
\begin{array}{c}
u-x \\
\mathrm{Jac}_{x}(I) \\
\end{array}
\right)
\]
vanish, where $\mathrm{Jac}_{x}(I)$ is the Jacobian of $I$ at the point $x$.

We define the \textit{ED-correspondence} to be the closure of the set of all pairs $(u,x)$, such that $x\in X^{\mathrm{reg}}$ is critical to $d_u$, and we denote it by $\mathcal{E}_X\subseteq \mathbb{C}^n_u\times \mathbb{C}^n_x$. In other words $\mathcal{E}_X$ is the closure of the variety defined by
\[
\left\{(u,x)\big|u\in\mathbb{C}^n,\ x\in X^{\mathrm{reg}},\ \mathrm{rank}\left(
\begin{array}{c}
u-x \\
\mathrm{Jac}_{x}(I) \\
\end{array}
\right)\leq c\right\}
\]
We have two natural projection maps $\pi_1:\mathcal{E}_X \to \mathbb{C}^n_u$ sending $(u,x)$ to $u$ and $\pi_2:\mathcal{E}_X \to \mathbb{C}^n_x$ sending $(u,x)$ to $x$. Let $\mathrm{Sing} X$ denote the singular locus of $X$, that is the set of all points of $x\in X$ such that all the $c\times c$ minors of $\mathrm{Jac}_x(I)$ vanish.

So for a given data point $u$, the cardinality of fiber of $\pi_1$ over $u$ , $\pi_1^{-1}(u)$, measures the number of critical points.

We want to discuss the locus of exceptional data points $u$ at which the number of complex critical points is different from the ED degree. As mentioned in the introduction, there are three ways in which we can have different number of critical points then expected. The first one is because a critical point may wander off into $\mathrm{Sing}X$ due to the closure appearing in the definition of $\mathcal{E}_{X}$. This locus is called the \textit{ED data singular locus.}
\newpage
\subsection{Data singular locus}
We use the precise definition of the ED data singular locus from \cite{DLT15}, that is
\[
\pi_1(\mathcal{E}_X\cap \pi_2^{-1}(\mathrm{Sing}X)).
\]
We denote the ED data singular locus of an algebraic variety $X$ by $\mathrm{DS}(X)$ (abbreviating "data singular" locus) and we aim to describe the data singular locus of affine cones. Our main result in this section is the following theorem.

\begin{thm}\label{Main1}
Let $X\subseteq \mathbb{C}^n$ be an irreducible affine cone that is not a linear space. Then the following two inclusions hold
\[
X^{*}\subseteq_{(1)} \mathrm{DS}(X)\subseteq_{(2)} X^{*}+\mathrm{Sing}X,
\] where $X^{*}$ denotes the dual variety to $X$.
\end{thm}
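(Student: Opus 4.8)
The plan is to prove the two inclusions separately, exploiting the special structure of an affine cone: namely, if $x \in X$ then $\lambda x \in X$ for all $\lambda$, so that $x \in T_x X$, and consequently $X \subseteq (T_x X)$ forces the condition $u - x \perp T_x X$ to split into $u \perp T_x X$ together with $x \perp T_x X$... more precisely, for a critical point $x$ of $d_u$ we have $u - x \in (T_x X)^\perp = $ (essentially) the fiber of the conormal variety, and the cone structure gives $\langle x, x\rangle$-type relations. For inclusion $(1)$, I would take a general point $y$ of the dual variety $X^*$; by definition of $X^*$ as the closure of the set of $y$ such that $y \perp T_x X$ for some $x \in X^{\mathrm{reg}}$, there is a smooth point $x$ with $y \in (T_x X)^\perp$. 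Because $X$ is a cone, $x \in T_x X$, hence $\langle y, x \rangle = 0$, so $y - x$ is perpendicular to $T_x X$ as well (since $y$ and $x$ both are). Now I would like to say $(y, x) \in \mathcal{E}_X$, but $x$ need not be singular; instead I would scale: replace $x$ by $tx$ and $y$ stays fixed, and as $t \to 0$ the point $tx$ runs into the vertex of the cone, which lies in $\mathrm{Sing}X$ (the cone is not a linear space, so the vertex is singular). The pairs $(y, tx)$ all lie in $\mathcal{E}_X$ because $y - tx \perp T_{tx}X = T_x X$, and taking the limit $t\to 0$ gives $(y,0)$ or, more usefully, shows $y \in \pi_1(\mathcal{E}_X \cap \pi_2^{-1}(\mathrm{Sing}X))$ after a careful limiting argument along the ruling; one must check the closure in the definition of $\mathcal{E}_X$ is respected, which it is since each $(y,tx)$ is genuinely in the open part. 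Taking closure over all such $y$ yields $X^* \subseteq \mathrm{DS}(X)$.

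For inclusion $(2)$, I would start from an arbitrary $u \in \mathrm{DS}(X)$. By definition there is a pair $(u, x) \in \mathcal{E}_X$ with $x \in \mathrm{Sing}X$. Since $(u,x)$ lies in the closure $\mathcal{E}_X$, there is a sequence (or a curve) $(u_k, x_k) \to (u,x)$ with $x_k \in X^{\mathrm{reg}}$ and $u_k - x_k \perp T_{x_k}X$. Using again that $X$ is a cone, $x_k \in T_{x_k}X$, so $u_k - x_k \perp T_{x_k}X$ combined with $x_k \perp (u_k - x_k)$ gives $u_k \perp T_{x_k}X$ as well; thus $u_k \in (T_{x_k}X)^\perp$, which places $u_k$ in (the affine cone over) a point of the conormal variety, i.e. $u_k \in X^*$ up to the closure issue — more precisely $u_k$ lies in the span of the conormal fiber at $x_k$. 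Passing to the limit, $u$ lies in the closure of $\bigcup_k \{u_k\}$, and I want to decompose $u = v + x$ with $v \in X^*$ and $x \in \mathrm{Sing}X$. The natural candidate is $v := u - x = \lim (u_k - x_k)$; each $u_k - x_k$ is conormal to $X$ at $x_k \in X^{\mathrm{reg}}$, hence lies in $X^*$ by definition, and $X^*$ is closed, so $v \in X^*$. Since $x \in \mathrm{Sing}X$ by hypothesis, we get $u = v + x \in X^* + \mathrm{Sing}X$, which is inclusion $(2)$.

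The main obstacle I anticipate is the first inclusion, specifically making the limiting/scaling argument rigorous: one must ensure that when a general point $y \in X^*$ is written as a conormal vector at some $x \in X^{\mathrm{reg}}$, the rescaled pairs $(y, tx)$ remain in $\mathcal{E}_X$ (this is fine, as $T_{tx}X = T_x X$ for a cone) and that the limit point in $\pi_2$ genuinely lands in $\mathrm{Sing}X$ — here one needs that the vertex $0$ of a cone that is not a linear space is a singular point, and that $y$ is recovered as $\pi_1$ of the limiting pair rather than being lost. A cleaner route may be to avoid taking $t\to 0$ and instead observe directly that $(y, 0) \in \mathcal{E}_X$: the pair $(y,0)$ is a limit of $(y, tx)$ with $tx \in X^{\mathrm{reg}}$ for generic small $t \neq 0$, so $(y,0) \in \mathcal{E}_X$ by definition of the closure, and $0 \in \mathrm{Sing}X$, giving $y = \pi_1(y,0) \in \mathrm{DS}(X)$ immediately. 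The subtlety is whether $tx$ is a regular point for generic $t$ — it is, since $x \in X^{\mathrm{reg}}$ and regularity along a ruling is generic — and whether the rank condition $u - x \perp T_x X$ passes to the pair $(y, tx)$, which it does by the cone computation $\langle y, x\rangle = 0 \Rightarrow \langle y, tx \rangle = 0$ and $y \perp T_x X$. A secondary technical point for inclusion $(2)$ is that the sequence $u_k$ need not itself lie in $X^*$ (only $u_k - x_k$ does), so the decomposition must be set up with $v = u - x$ from the start rather than trying to show $u \in X^*$; once phrased this way the argument is essentially immediate from closedness of $X^*$.
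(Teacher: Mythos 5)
Your inclusion $(2)$ is essentially the paper's argument: approximate $(u,x_0)\in\mathcal{E}_X\cap\pi_2^{-1}(\mathrm{Sing}X)$ by pairs $(u_k,x_k)$ over the regular locus, observe that each $u_k-x_k$ lies in $X^*$ (the paper cites the ED duality theorem for affine cones \cite[Theorem 5.2]{DHOST13} for exactly this step), and pass to the limit using that $X^*$ is closed, so $u-x_0\in X^*$ and $u\in X^*+\mathrm{Sing}X$. The detour in which you claim $u_k\perp T_{x_k}X$ is both unnecessary and false (it would require $x_k\perp T_{x_k}X$), but since you discard it and set $v=u-x$ from the start, your conclusion for $(2)$ stands.

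Inclusion $(1)$ has a genuine gap. From $y\perp T_xX$ and $x\in T_xX$ you correctly deduce $(y|x)=0$, but you then assert that $x\perp T_xX$, hence that $y-tx\perp T_{tx}X=T_xX$ and so $(y,tx)\in\mathcal{E}_X$. This is false in general: $x\perp T_xX$ would in particular force $(x|x)=0$, and for, say, the cone over an ellipse $V(x_1^2+4x_2^2-9x_3^2)$ a general smooth point $x$ is neither isotropic nor proportional to $\nabla f(x)$, so $y-tx$ is not conormal at $tx$ and the pairs $(y,tx)$ do not lie in $\mathcal{E}_X$. Your proposed ``cleaner route'' inherits the same flaw, since it still needs $(y,tx)$ to lie in the part of $\mathcal{E}_X$ over $X^{\mathrm{reg}}$. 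The correct move --- the one the paper makes --- is to translate the data point along the ruling together with the critical point: the pairs $(y+tx,\,tx)$ do lie in $\mathcal{E}_X$ for $t\neq 0$, because $(y+tx)-tx=y$ is perpendicular to $T_{tx}X=T_xX$ and $tx$ is regular (the relevant $(c+1)\times(c+1)$ minors rescale by a power of $t$). Letting $t\to 0$ gives $(y,0)\in\mathcal{E}_X$; since $X$ is a cone that is not a linear space, $0\in\mathrm{Sing}X$, so $y=\pi_1(y,0)\in\mathrm{DS}(X)$. This applies to the dense subset of $X^*$ of vectors conormal at some regular point, which suffices. So your target limit point $(y,0)$ is the right one, but the curve you use to reach it is not contained in $\mathcal{E}_X$.
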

We view $X^{*}$ as subset of $ \mathbb{C}^n$ via the standard symmetric bilinear form $(\cdot|\cdot)$ on $\mathbb{C}^n$.
\begin{proof}
First we prove inclusion $(1)$ for a dense subset of $X^*$. For this take $u\in X^*$, such that there exists a regular point $x_r\in X^{\mathrm{reg}}$, such that $u\perp T_{x_r}X$, that is all the $(c+1)\times (c+1)$ minors of
$\left(
   \begin{array}{c}
     u \\
     \mathrm{Jac}_{x_r}(I) \\
   \end{array}
 \right)
$ vanish, where $c$ is the codimension of $X$ and $\mathrm{Jac}_{x_r}(I)$ is the Jacobian of the (radical) ideal $I$ of $X$ at the point $x_r$. We denote an arbitrary $(c+1)\times(c+1)$ minor of this matrix by $\left(
   \begin{array}{c}
     u \\
     \mathrm{Jac}_{x_r}(I) \\
   \end{array}
 \right)_{(c+1)}
$.

We claim that $(u+\lambda x_r, \lambda x_r)\in \mathcal{E}_X$ for all real $\lambda\geq 0$. We have that if $f\in I$, homogeneous of degree $d$, then $\nabla f(\lambda x)=\lambda^d \nabla f(x)$. So if $x_r$ is a regular point then $\lambda x_r$ is also regular, for any $\lambda>0$.  Moreover we get that for any $(c+1)\times (c+1)$ minor  \[\left(
                                                                                                                              \begin{array}{c}
                                                                                                                                (u+\lambda x_r)-\lambda x_r \\
                                                                                                                                \mathrm{Jac}_{\lambda x_r}(I) \\
                                                                                                                              \end{array}
                                                                                                                            \right)_{(c+1)}=\left(
                                                                                                                              \begin{array}{c}
                                                                                                                                u\\
                                                                                                                                \mathrm{Jac}_{\lambda x_r}(I) \\
                                                                                                                              \end{array}
                                                                                                                            \right)_{(c+1)}=\lambda^N \left(
                                                                                                                              \begin{array}{c}
                                                                                                                                u \\
                                                                                                                                \mathrm{Jac}_{x_r}(I) \\
                                                                                                                              \end{array}
                                                                                                                            \right)_{(c+1)}=0,\] where $N$ is the sum of degrees of the defining polynomials of $I$.

So $(u+\lambda x_r, \lambda x_r)\in \mathcal{E}_X$ for all real $\lambda>0$.
But then taking the limit when $\lambda$ goes to zero, we get that
\[
(u,0)\in \mathcal{E}_X\cap \pi_2^{-1}(\mathrm{Sing}X),
\] since $\mathcal{E}_X\cap \pi_2^{-1}(\mathrm{Sing}X)$ is Zariski closed (hence closed wrt. Euclidean topology as well) and since $0\in \mathrm{Sing}X$. Indeed, for every $x\in X$ the line $\{\lambda \cdot x\}$ is in the tangent space to $0$, so $T_0 X$ is equal to the the linear span of $X$, which has a grater dimension that $X$ if and only if $X$ is not a linear space, hence $0\in \mathrm{Sing}X$. So then $u=\pi_1((u,0))\in \mathrm{DS}(X)$.

For the proof of $(2)$ take an element $(u,x_0)\in \mathcal{E}_X\cap \pi_2^{-1}(\mathrm{Sing}X)$, then this point can be approximated by a sequence in the part of $\mathcal{E}_X$ over $X^{\mathrm{reg}}$. That is there exists a sequence $\delta_i\rightarrow 0$ in $\mathbb{C}^n$ and $x_i\rightarrow x_0$ with all the $x_i\in X^{\mathrm{reg}}$, such that
\[
(u+\delta_i, x_i)\in \mathcal{E}_X.
\]
By the ED Duality Theorem for affine cones (see. \cite[Theorem 5.2]{DHOST13}) we get that $(u+\delta_i)-x_i\in X^{*}$, for all $i$. Now taking the limit, when $i$ goes to infinity, we get that $u-x_0\in X^{*}$, since $X^{*}$ is closed (hence closed wrt. Euclidean topology as well). Finally this means that $u\in x_0+X^{*}\subseteq \mathrm{Sing}X+X^{*}$.
\end{proof}
Note that the condition in the theorem that $X$ is not a linear space is necessary in order to prove a similar statement. Otherwise if $X$ is a linear subspace of $\mathbb{C}^n$, then it has a non-empty dual (its orthogonal complement with respect to the inner product), but its singular locus is empty, hence its data singular locus is empty as well.

\subsection{Data isotropic locus}
A second possibility for a data point $u$ to have smaller number of critical points than expected is by letting one of the critical points to become isotropic. The authors of \cite{DHOST13} define the ED degree of a projective variety in $\mathbb{P}^{n-1}$ to be the ED degree of the corresponding affine cone in $\mathbb{C}^n$, moreover given a data point $u$ the critical points to these two objects are in a one-to-one correspondence, given that non of the critical points lies in the isotropic quadric (see\cite[Lemma 2.8]{DHOST13}). In particular, the role of $Q$ exhibits that the computation of ED degree is a metric problem. This is the reason that even though in the definition of the affine $\mathcal{E}_X$ we keep the isotropic critical points, but when we pass to projective varieties we will exclude the isotropic points. This way the data isotropic locus represents the locus of data points which have different number of critical points if $X$ is considered as an affine cone or if is considered as a projective variety.

More precisely we define the \textit{ED data isotropic locus} to be
\[
\pi_1(\mathcal{E}_X\cap \pi_2^{-1}(Q\cap X),
\] where $Q=(\sum_{i=1}^n x_i^2)$ denotes the isotropic quadric with respect to the standard symmetric bilinear form.

We denote the ED data isotropic locus of an algebraic variety $X$ by $\mathrm{DI}(X)$ (abbreviating "data isotropic" locus).
We have the following theorem for the ED data isotropic locus of affine cones.
\begin{thm}\label{Main2}
Let $X\subseteq \mathbb{C}^n$ be an irreducible affine cone. Then the following two inclusions hold
\[
X^{*}\subseteq_{(1)} \mathrm{DI}(X)\subseteq_{(2)} X^{*}+(Q\cap X),
\] where $X^{*}$ denotes the dual variety to $X$.
\end{thm}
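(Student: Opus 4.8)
The plan is to re-run the proof of Theorem~\ref{Main1}, with the isotropic section $Q\cap X$ playing the role of the singular locus $\mathrm{Sing}X$. The structural feature that makes this work, and that lets us drop the ``not a linear space'' hypothesis, is that the apex of an affine cone always lies in $Q\cap X$: indeed $0\in X$ because $X$ is a cone, and $0\in Q$ because the origin satisfies $\sum_{i=1}^{n}x_i^2=0$.

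For inclusion $(1)$, I would work on the dense subset of $X^*$ consisting of those $u$ that admit a regular witness $x_r\in X^{\mathrm{reg}}$ with $u\perp T_{x_r}X$ (such $u$ are dense in $X^*$ by the definition of the dual variety). By homogeneity of the equations of the cone, i.e. $\nabla f(\lambda x)=\lambda^{d}\nabla f(x)$ for $f\in I$ homogeneous of degree $d$, every $(c+1)\times(c+1)$ minor of the matrix with first row $(u+\lambda x_r)-\lambda x_r=u$ and remaining rows $\mathrm{Jac}_{\lambda x_r}(I)$ equals $\lambda^{N}$ times the corresponding minor of the matrix with first row $u$ and remaining rows $\mathrm{Jac}_{x_r}(I)$, and the latter vanishes since $u\perp T_{x_r}X$; hence $(u+\lambda x_r,\lambda x_r)\in\mathcal{E}_X$ for all real $\lambda>0$. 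Letting $\lambda\to 0$ and using that $\mathcal{E}_X$ is Zariski closed (hence Euclidean closed) gives $(u,0)\in\mathcal{E}_X$, and since $0\in Q\cap X$ we obtain $(u,0)\in\mathcal{E}_X\cap\pi_2^{-1}(Q\cap X)$, so $u=\pi_1((u,0))\in\mathrm{DI}(X)$. This is precisely the computation from the proof of Theorem~\ref{Main1}, with the membership $0\in\mathrm{Sing}X$ replaced by the easier $0\in Q\cap X$.

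For inclusion $(2)$, the argument is essentially verbatim the one for $\mathrm{DS}(X)$. Take $(u,x_0)\in\mathcal{E}_X\cap\pi_2^{-1}(Q\cap X)$, so $x_0\in Q\cap X$, and approximate it by a sequence $(u+\delta_i,x_i)\in\mathcal{E}_X$ with $\delta_i\to 0$, $x_i\to x_0$ and each $x_i\in X^{\mathrm{reg}}$, which is possible because $\mathcal{E}_X$ is the closure of the part of $\mathcal{E}_X$ lying over $X^{\mathrm{reg}}$. The ED Duality Theorem for affine cones \cite[Theorem~5.2]{DHOST13} yields $(u+\delta_i)-x_i\in X^*$ for all $i$; letting $i\to\infty$ and using that $X^*$ is closed gives $u-x_0\in X^*$, i.e. $u\in x_0+X^*\subseteq(Q\cap X)+X^*$.

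I do not expect a real obstacle, since both steps mirror the proof of Theorem~\ref{Main1}; the one point deserving a word is the passage in inclusion $(1)$ from a dense subset of $X^*$ to all of $X^*$, which is legitimate because $X^*$ is irreducible (being the dual of an irreducible variety). If anything the data isotropic case is simpler than the data singular case, because the apex of the cone is always available as a limiting data-isotropic critical point, so no genuinely new ideas are required.
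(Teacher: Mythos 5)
Your proposal is correct and is exactly the argument the paper intends: the paper's own proof of Theorem~\ref{Main2} simply says it ``follows the lines of the proof of Theorem~\ref{Main1}, keeping in mind that $0\in X$ is always an isotropic point,'' and you have carried out precisely that substitution of $Q\cap X$ for $\mathrm{Sing}X$, including the observation that the apex $0$ always lies in $Q\cap X$ so the ``not a linear space'' hypothesis can be dropped. No differences in approach to report.
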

Again we view $X^{*}$ as subset of $ \mathbb{C}^n$ via the standard symmetric bilinear form $(\cdot|\cdot)$ on $\mathbb{C}^n$.
\begin{proof}
The proof follows the lines of the proof of \ref{Main1}, keeping in mind that $0\in X$ is always an isotropic point.
\end{proof}
In the following two sections we will give examples to show that both inclusions appearing in Theorem~\ref{Main1} and Theorem~\ref{Main2} can be strict and/or equalities.

\section{Examples of the ED data singular locus}
In this section we present several useful examples concerning the ED data singular locus of an affine cone. Before we get to the examples we present how can one computationally determine the objects we are working with. We illustrate the main algorithms with code in {\tt Macaulay2} \cite{M2}.
For an affine cone $X\subseteq \mathbb{C}^n$, of codimension $c$ with defining radical ideal $I$, one can determine its dual $X^*$ using the following code by \cite[Algorithm 5.1]{RS13}.
\newpage
\begin{example}[\textbf{Computing the dual variety}]
We present the algorithm for the real affine cone $X\subseteq \mathbb{C}^{3}$ defined by the homogeneous equation $f=x_1^3 + x_2^2 x_3$.
\begin{verbatim}
n=3;
kk=QQ[x_1..x_n,y_1..y_n];
f=x_1^3+x_2^2*x_3;
I=ideal(f);
c=codim I;
Y=matrix{{y_1..y_n}};
Jac= jacobian gens I;
S=submatrix(Jac,{0..n-1},{0..numgens(I)-1});
Jbar=S|transpose(Y);
EX = I + minors(c+1,Jbar);
SingX=I+minors(c,Jac);
EXreg=saturate(EX,SingX);
IDual=eliminate(toList(x_1..x_n),EXreg)
\end{verbatim}
Which gives at the end that $X^*$ is the zero locus of the polynomial $f^*=4x_1^3 - 27x_2^2x_3$
\end{example}
Following the definition of the data singular locus, the next example contains an algorithm for calculating the ideal of it.

\begin{example}[\textbf{Computing the data singular locus}]
We present the algorithm for the real affine cone $X\subseteq \mathbb{C}^{3}$ defined by the homogeneous equation $f=x_1^3 + x_2^2 x_3$.
\begin{verbatim}
n=3;
kk=QQ[x_1..x_n,y_1..y_n];
f=x_1^3+x_2^2*x_3;
I=ideal(f);
c=codim I;
Y=matrix{{x_1..x_n}}-matrix{{y_1..y_n}};
Jac= jacobian gens I;
S=submatrix(Jac,{0..n-1},{0..numgens(I)-1});
Jbar=S|transpose(Y);
EX = I + minors(c+1,Jbar);
SingX=I+minors(c,Jac);
EXreg=saturate(EX,SingX);
DSX=radical eliminate(toList(x_1..x_n),EXreg+SingX)
\end{verbatim}
Which gives as output that $\mathrm{DS}(X)$ is the zero locus of the polynomial $x_1(4x_1^3 - 27x_2^2x_3)$.
\end{example}
Now we arrived at the point to present a sequence of interesting varieties and the corresponding duals and data singular loci. The first example is the one we used for presenting the algorithms previously. In this example both inclusion $(1)$ and inclusion $(2)$ are strict, as it will be seen.
\begin{example}[\textbf{Cuspidal Cubic Cone}] Let $X\subseteq \mathbb{C}^{3}$ be the real variety defined by the homogeneous equation $f=x_1^3 + x_2^2 x_3$. Since it is an affine cone it has a dual $X^{*}$, which is defined by the dual equation $f^{*}=4x_1^3 - 27x_2^2x_3.$ For the data singular locus we get that $\mathrm{DS}(X)$ is the zero locus of the polynomial $x_1(4x_1^3 - 27x_2^2x_3)$. So we can see that $X^{*}$ is even a component of $\mathrm{DS}(X)$. Moreover $X^{*}+\mathrm{Sing} X$ is something much larger and not equal to $\mathrm{DS}(X)$. For example the point $(3,2,1)+(0,0,1)\in X^{*}+\mathrm{Sing}X$, but is not on $\mathrm{DS}(X)$. Figure $1$ shows $X$ in blue and $X^{*}$ in green and $\mathrm{DS}(X)$ is the union of the green colored $X^{*}$ and the additional surface in red.
\begin{figure}[h]\label{Fig:One}
\begin{center}
\vskip -0.3cm
\includegraphics[scale=0.7]{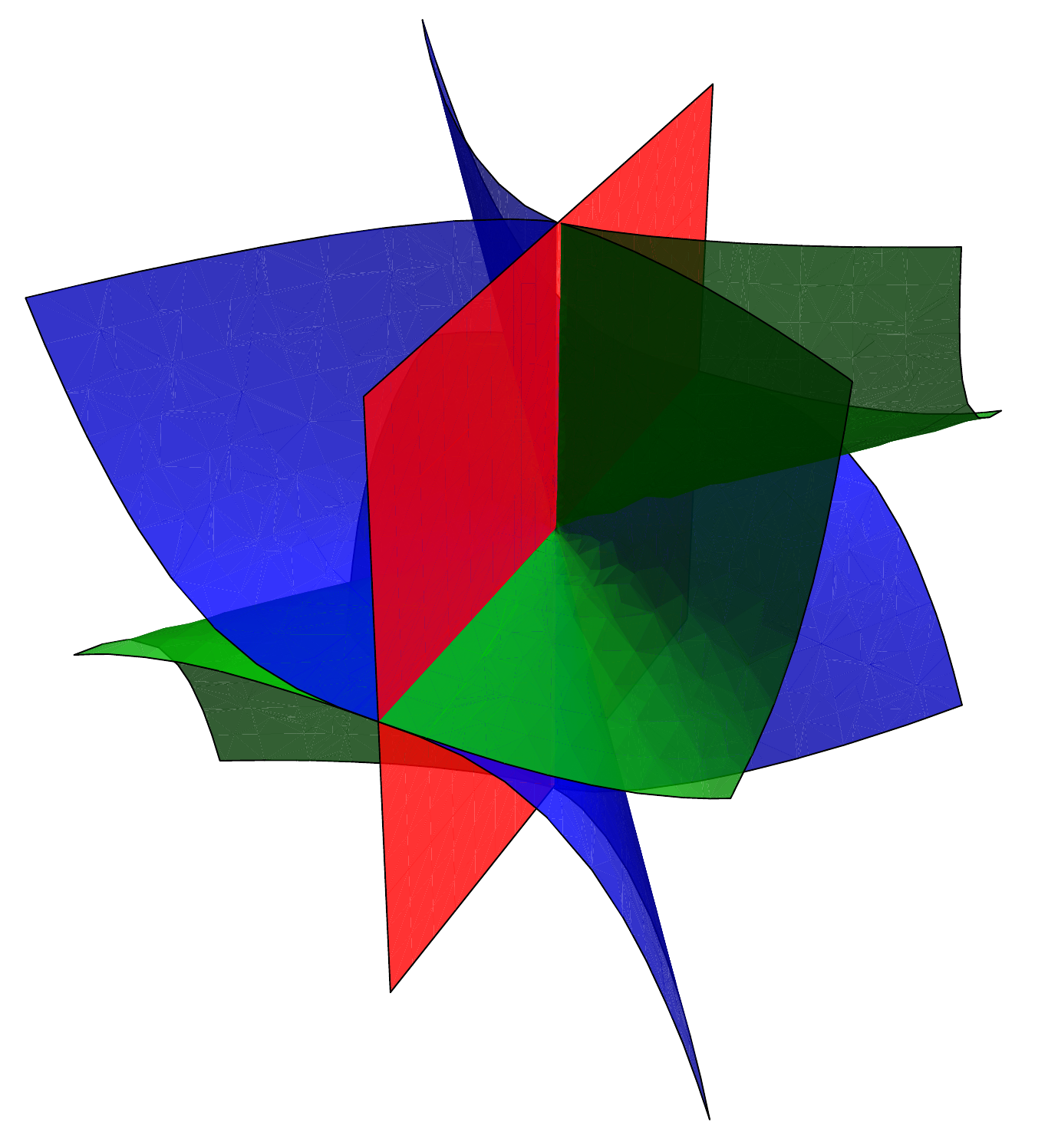}
\vskip -0.4cm
\caption{$V(x_1^3 + x_2^2x_3)$ together with its dual and its data singular locus}
\end{center}
\end{figure}
\end{example}
The next example shows that both inclusions $(1)$ and $(2)$ can be in fact equalities. More generally we have the following corollary to Theorem~\ref{Main1}.
\begin{cor}\label{Cor:Smooth}
Let $X\subseteq \mathbb{C}^n$ be an affine cone, with $\mathrm{Sing}X=\{0\}$, then $\mathrm{DS}(X)=X^{*}.$ Moreover if $X$ is a general hypersurface of degree $d$, then \[\mathrm{deg}(\mathrm{DS}(X))=d(d-1)^{n-1}.\]
\end{cor}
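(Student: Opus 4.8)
The plan is to read the set‑theoretic equality off Theorem~\ref{Main1} and then reduce the degree count to the classical degree formula for dual hypersurfaces. For the first assertion, note that an irreducible affine cone $X$ with $\mathrm{Sing}X=\{0\}\neq\emptyset$ is not a linear subspace of $\mathbb{C}^{n}$, because a linear subspace is smooth and hence has empty singular locus; thus the hypotheses of Theorem~\ref{Main1} hold and it gives
\[
X^{*}\subseteq\mathrm{DS}(X)\subseteq X^{*}+\mathrm{Sing}X=X^{*}+\{0\}=X^{*},
\]
the last equality being the triviality that adding the one‑point set $\{0\}$ to a subset of $\mathbb{C}^{n}$ changes nothing. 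Hence $\mathrm{DS}(X)=X^{*}$, and in particular $\mathrm{DS}(X)$ acquires no component or embedded structure beyond $X^{*}$, so its degree is simply $\deg X^{*}$.

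It remains to compute $\deg X^{*}$ for a general hypersurface cone. Here I would use genericity of the degree-$d$ form $f$ defining $X$: the projectivization $\mathbb{P}(X)\subseteq\mathbb{P}^{n-1}$ is then a smooth hypersurface, hence reflexive and not dual‑defective, so $X^{*}$ is again a hypersurface, its degree equals that of the projective dual $\mathbb{P}(X)^{\vee}$, and in characteristic zero the Gauss map $\gamma\colon\mathbb{P}(X)\to\mathbb{P}(X)^{\vee}$, $[x]\mapsto[\nabla f(x)]$, is birational onto its image. One then obtains $\deg X^{*}=\deg\mathbb{P}(X)^{\vee}$ from the projection formula: the entries of $\gamma$ are forms of degree $d-1$, so pulling back a hyperplane along $\gamma$ replaces the hyperplane class on $\mathbb{P}(X)$ by $(d-1)$ times it, and the resulting intersection number on the smooth degree-$d$ hypersurface $\mathbb{P}(X)$ is precisely its classical ``class''. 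Assembling this gives $\deg\mathrm{DS}(X)=d(d-1)^{n-1}$; for the dual‑degree formula see \cite{GKZ}.

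Being a corollary, this has no genuine obstacle — the work is all bookkeeping. One must check that $X$ is not a linear space, which makes Theorem~\ref{Main1} applicable and is immediate from $\mathrm{Sing}X\neq\emptyset$; and one must confirm that the word ``general'' delivers exactly what the dual‑degree formula needs: smoothness of $\mathbb{P}(X)$, non‑defectivity of its dual, and birationality of the Gauss map in characteristic zero, so that $\deg X^{*}$ is the full polynomial in $d$ and $n$ rather than a proper divisor of it. If one wished to avoid the reflexivity input, an alternative is to compute $\deg X^{*}$ directly as the number of tangencies of a generic pencil projected from a codimension‑two centre, i.e. via Riemann--Hurwitz / polar‑class bookkeeping for $\mathbb{P}(X)$; the answer is the same.
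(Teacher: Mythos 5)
Your treatment of the first assertion is correct and is exactly the paper's route: the paper's proof says the equality ``follows directly from the claim of Theorem~\ref{Main1}'', i.e.\ $X^{*}\subseteq \mathrm{DS}(X)\subseteq X^{*}+\{0\}=X^{*}$. Your explicit observation that $\mathrm{Sing}X=\{0\}\neq\emptyset$ forces $X$ to be non-linear, so that Theorem~\ref{Main1} actually applies, is a detail the paper leaves implicit and is worth having.

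For the ``moreover'' part the paper offers no argument at all (it cites \cite[Exercise 5.14]{RS13} as classical), so you are supplying a proof where the paper supplies a reference; the Gauss-map strategy is the right one. But your own computation does not produce the number you then assert. The projectivization $\mathbb{P}(X)$ is a hypersurface in $\mathbb{P}^{n-1}$, hence of dimension $n-2$; pulling back the hyperplane class through a map given by forms of degree $d-1$ and intersecting $\dim\mathbb{P}(X)=n-2$ times on a degree-$d$ hypersurface yields $\deg X^{\vee}=d\,(d-1)^{n-2}$, not $d\,(d-1)^{n-1}$. (Sanity check: a general plane cubic, $n=3$, $d=3$, has dual curve of degree $6=d(d-1)^{n-2}$, whereas $d(d-1)^{n-1}=12$.) Your sentence ``Assembling this gives $\deg\mathrm{DS}(X)=d(d-1)^{n-1}$'' therefore inserts an extra factor of $(d-1)$ that your argument does not produce; the classical formula $d(d-1)^{N-1}$ refers to a hypersurface in $\mathbb{P}^{N}$, and for an affine cone in $\mathbb{C}^{n}$ one has $N=n-1$. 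You need either to account for that missing factor or to flag the index shift between your computation (ambient $\mathbb{P}^{n-1}$) and the exponent $n-1$ appearing in the target statement; as written, the derivation of the degree formula does not close.
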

\begin{proof}
The first part follows directly from the claim of Theorem~\ref{Main1}. The moreover part is classical and we refer to \cite[Exercise 5.14]{RS13}.
\end{proof}
\begin{example}[\textbf{Cone over ellipse}]
Let $X\subseteq \mathbb{C}^{3}$ the cone over an ellipse, defined by the homogeneous equation $f=x_1^2 + 4x_2^2 - 9x_3^2$. The singular locus $\mathrm{Sing}X$ only contains $0$, so as a consequence of Theorem~\ref{Main1} we have that $\mathrm{DS}(X)$ equals the dual variety $X^{*}$, defined by the dual equation $f^*=x_1^2 + x_2^2/4 - x_3^2/9$. Figure $2$ shows $X$ in blue and $X^*$ in green.
\begin{figure}[h]
\begin{center}
\vskip -0.3cm
\includegraphics[scale=0.7]{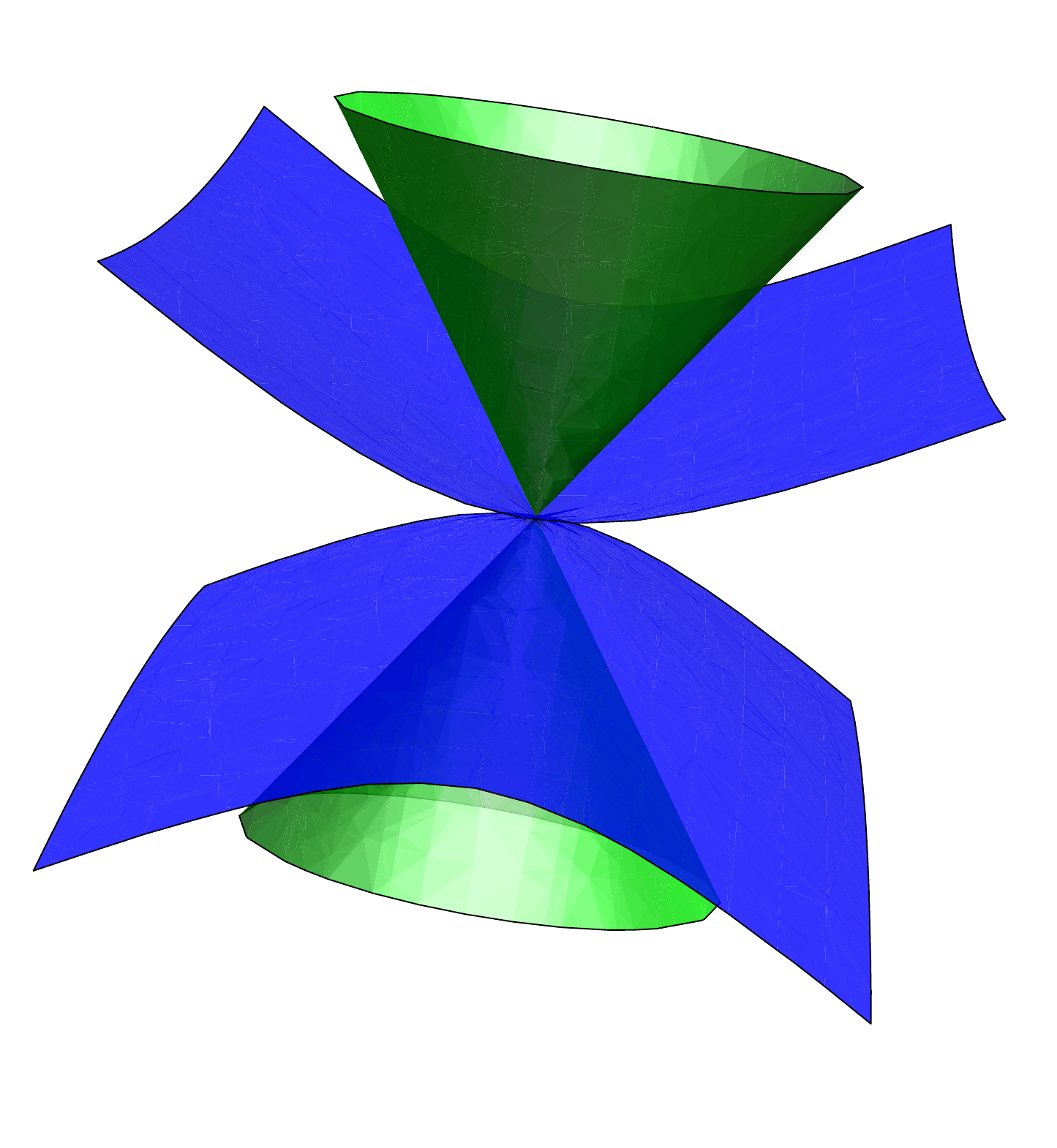}
\vskip -0.4cm
\caption{$V(x_1^2 + 4x_2^2 - 9x_3^2)$ together with its dual}
\end{center}
\end{figure}
\end{example}
The next example concernes the well known and much used \textit{determinantal varieties}. We will see that for this variety inclusion $(1)$ is strict and inclusion $(2)$ is an equality.
\begin{example}[\textbf{Determinantal varieties}]\label{Ex:Determinant}
Denote by $M_{n\times m}^{\leq r}$ the variety of $n\times m$ matrices (suppose $n\leq m$) of rank at most $r$. It is classical that the singular locus is the variety $M_{n\times m}^{\leq r-1}$. By \cite[Chapter 1, Prop. 4.11]{GKZ} we have that the dual variety is exactly $M_{n\times m}^{\leq n-r}$. So applying Theorem~\ref{Main1} we get that
\[
M_{n\times m}^{\leq n-r}\subseteq \mathrm{DS}(M_{n\times m}^{\leq r})\subseteq M_{n\times m}^{\leq n-r}+M_{n\times m}^{\leq r-1}=M_{n\times m}^{\leq n-1}.
\] So for rank-one matrices ($r=1$) we get that $\mathrm{DS}(M_{n\times m}^{\leq 1})=M_{n\times m}^{\leq n-1}$, which is not a surprise based on Corollary~\ref{Cor:Smooth}, since $M_{n\times m}^{\leq 1}$ is smooth, except $0$.  But something more is true for general $r$. We claim that the upper bound for the inclusions is always attained. For this we have the following proposition.
\end{example}

\begin{prop}
The ED data singular locus of the determinantal variety $M_{n\times m}^{\leq r}$ is equal to $M_{n\times m}^{\leq n-1}$, for all $1\leq r\leq n-1$.
\end{prop}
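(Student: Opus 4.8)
The plan is to get the upper bound for free from Theorem~\ref{Main1} and then establish the matching lower bound by an explicit limiting construction built from the singular value decomposition. Applying Theorem~\ref{Main1} to $X=M_{n\times m}^{\leq r}$, and using the classical facts recalled in Example~\ref{Ex:Determinant} that $\mathrm{Sing}(M_{n\times m}^{\leq r})=M_{n\times m}^{\leq r-1}$ and $(M_{n\times m}^{\leq r})^{*}=M_{n\times m}^{\leq n-r}$, we immediately obtain $\mathrm{DS}(M_{n\times m}^{\leq r})\subseteq M_{n\times m}^{\leq n-r}+M_{n\times m}^{\leq r-1}=M_{n\times m}^{\leq n-1}$. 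So the whole content of the proposition is the reverse inclusion $M_{n\times m}^{\leq n-1}\subseteq\mathrm{DS}(M_{n\times m}^{\leq r})$. Note that a dimension count alone will not suffice here, since for $r\geq 2$ one has $\dim M_{n\times m}^{\leq n-r}<\dim M_{n\times m}^{\leq n-1}$, so the lower bound $M_{n\times m}^{\leq n-r}=X^{*}\subseteq\mathrm{DS}(X)$ from Theorem~\ref{Main1} is genuinely not enough; a direct construction is needed.

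Since $\mathrm{DS}(M_{n\times m}^{\leq r})$ is Zariski closed and $M_{n\times m}^{\leq n-1}$ is irreducible, it is enough to exhibit a Zariski-dense subset of $M_{n\times m}^{\leq n-1}$ lying inside $\mathrm{DS}(M_{n\times m}^{\leq r})$. I would take a real matrix $U$ of rank exactly $n-1$ with pairwise distinct nonzero singular values $\sigma_1>\cdots>\sigma_{n-1}>0$ and singular value decomposition $U=\sum_{i=1}^{n-1}\sigma_i u_i v_i^{\top}$, completing the $u_i$ to an orthonormal basis $u_1,\dots,u_n$ of $\mathbb{R}^n$ and the $v_i$ to orthonormal $v_1,\dots,v_n$ in $\mathbb{R}^m$; such $U$ form a Euclidean-open, hence Zariski-dense, subset of $M_{n\times m}^{\leq n-1}$. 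Fix $S\subseteq\{1,\dots,n-1\}$ with $|S|=r-1$, and for $0<\varepsilon<\sigma_{n-1}$ set $U_\varepsilon:=U+\varepsilon\,u_n v_n^{\top}$ (rank $n$, distinct singular values) and $X_\varepsilon:=\varepsilon\,u_n v_n^{\top}+\sum_{i\in S}\sigma_i u_i v_i^{\top}$. By the Eckart--Young theorem $X_\varepsilon$ is a critical point of the Frobenius distance from $U_\varepsilon$ to $M_{n\times m}^{\leq r}$, and since $X_\varepsilon$ has rank exactly $r$ it is a regular point, so $(U_\varepsilon,X_\varepsilon)\in\mathcal{E}_{M_{n\times m}^{\leq r}}$. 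Letting $\varepsilon\to 0$ gives $U_\varepsilon\to U$ and $X_\varepsilon\to X_0:=\sum_{i\in S}\sigma_i u_i v_i^{\top}$, which has rank $r-1$; by closedness of $\mathcal{E}_{M_{n\times m}^{\leq r}}$ we get $(U,X_0)\in\mathcal{E}_{M_{n\times m}^{\leq r}}$ with $X_0\in M_{n\times m}^{\leq r-1}=\mathrm{Sing}(M_{n\times m}^{\leq r})$, hence $U=\pi_1(U,X_0)\in\mathrm{DS}(M_{n\times m}^{\leq r})$. Ranging $U$ over this dense set finishes the inclusion.

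Two points need care, and I expect the obstacle to be expository rather than substantive. First, Eckart--Young produces a critical point for the real Frobenius distance, so one must check that the real matrix $U_\varepsilon-X_\varepsilon$ is orthogonal to the complexified tangent space $T_{X_\varepsilon}M_{n\times m}^{\leq r}$ with respect to the complex bilinear form $(\cdot\,|\,\cdot)$, i.e.\ that it satisfies the rank condition defining $\mathcal{E}$; this is immediate because $(\cdot\,|\,\cdot)$ restricts to the standard inner product on $\mathbb{R}^{nm}$ and the tangent space at the smooth real point $X_\varepsilon$ is the complexification of the real tangent space. Second, one must justify that the real matrices of rank exactly $n-1$ with distinct nonzero singular values are Zariski-dense in the complex variety $M_{n\times m}^{\leq n-1}$, which follows from the irreducibility of $M_{n\times m}^{\leq n-1}$ together with density of its smooth real points. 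The geometric mechanism here — a regular critical point of rank $r$ degenerating to one of rank $r-1$ as the smallest retained singular value is driven to zero — is precisely the one underlying the inclusion $X^{*}\subseteq\mathrm{DS}(X)$ in Theorem~\ref{Main1}, and for $r=1$ the construction recovers $\mathrm{DS}(M_{n\times m}^{\leq 1})=M_{n\times m}^{\leq n-1}$ consistently with Corollary~\ref{Cor:Smooth}.
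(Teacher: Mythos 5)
Your proof is correct and rests on the same key ingredient as the paper's, namely the SVD classification of the critical points of the Frobenius distance to $M_{n\times m}^{\leq r}$ from \cite[Example 2.3]{DHOST13}, with a retained singular value tending to zero forcing a critical point into $M_{n\times m}^{\leq r-1}$. The only difference is organisational: you take the upper bound from Theorem~\ref{Main1} and make explicit the limit $\varepsilon\to 0$ and the density argument needed to place the degenerate pair $(U,X_0)$ in the closure $\mathcal{E}_X$, steps the paper's equivalence-style argument leaves implicit.
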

\begin{proof}
A $n\times m$ matrix $U$ lies in $\mathrm{DS}(M_{n\times m}^{\leq r})$ if and only if it has a singular critical point. By \cite[Example 2.3]{DHOST13} all the critical points of $U$ look like \[T_1\cdot \mathrm{Diag}(0,0,...,\sigma_{i_1},0,...,0,\sigma_{i_r},0,...,0)\cdot T_2,\] where the singular value decomposition of $U$ is equal to $U=T_1\cdot\mathrm{Diag}(\sigma_1,...,\sigma_n)\cdot T_2$, with $\sigma_1>...>\sigma_n$ singular values and $T_1,T_2$ orthogonal matrices of size $n\times n$ and $m\times m$. Such a critical point is singular if and only if it has rank at most $r-1$, which can only happen if one of the singular values $\sigma_{i_1},...,\sigma_{i_r}$ is zero. So there exists a  singular critical point to $U$ if and only if there is a zero singular value of $U$, which can only happen if $U$ has a rank defect, hence all the $(n-1)\times (n-1)$ minors are zero, that is $U\in M_{n\times m}^{\leq n-1}$.
\end{proof}
The next example shows that $X^*$ is a subvariety of $\mathrm{DS}(X)$ but not necessarily a component of it.
\begin{example}[\textbf{Hurwitz determinant}]\label{Ex:Hurwitz}
In control theory, to check whether a given polynomial is stable one builds up the so called Hurwitz matrix $H_n$ and checks if every leading principal minor of $H_n$ is positive. Take $n=4$, then the $4$-th Hurwitz matrix looks like
\[
H_4=\left(
      \begin{array}{cccc}
        x_2 & x_4 & 0 & 0 \\
        x_1 & x_3 & x_5 & 0 \\
        0 & x_2 & x_4 & 0 \\
        0 & x_1 &x_3  & x_5 \\
      \end{array}
    \right).
\]
The ratio $\Gamma_4 = \det(H_4)/x_5$ is a
homogeneous polynomial and it is called the \textit{Hurwitz determinant} for $n=4$ by \cite[Example 3.5]{DHOST13}.

Let $X\subseteq \mathbb{C}^5$ be the affine cone defined by $\Gamma_4$. Then its dual variety has one irreducible component given by
\[
X^*=V(-x_3x_4+x_2x_5,-x_3^2+x_1x_5,-x_2x_3+x_1x_4).
\]
While its data singular locus $\mathrm{DS}(X)$ has two irreducible components and it is defined by
\[
V((x_1x_2^2+x_2x_3x_4+x_4^2x_5)(x_2^4x_3-x_1x_2^3x_4-2x_1x_2x_4^3-x_3x_4^4+2x_2^3x_4x_5+x_2x_4^3x_5)).
\]
It is clear that $X^*$ is not component of $\mathrm{DS}(X)$. Moreover $\mathrm{DS}(X)$ is not equal to $X^*+\mathrm{Sing}X$, since $\mathrm{Sing}X=V(x_2,x_4)$ and the point \[(2,1,1,0,1)=(1,1,0,0,0)+(1,0,1,0,1)\] lies on $X^*+\mathrm{Sing}X$ but it is not on $\mathrm{DS}(X)$.
\end{example}
We have thus seen examples of varieties with: both inclusions in Theorem~\ref{Main1} being strict, both inclusions in Theorem~\ref{Main1} being equalities and the second inclusion being an equality, while the first one is strict. It is natural to ask if there are examples where the first inclusion is an equality, while the second one is strict. The author could not find such an example, so the following question arises.
\begin{problem}
Find an affine cone $X$, such that $X^*=\mathrm{DS}\subset X^*+\mathrm{Sing}(X)$ or prove that there is no such $X$.
\end{problem}

\section{Examples of the ED data isotropic locus}
In this section we present several application oriented examples concerning the ED data isotropic locus of an affine cone. We begin with presenting how can one computationally determine the data isotropic locus of a variety.

\begin{example}[\textbf{Computing the data isotropic locus}]
We present the algorithm for the real affine cone $X\subseteq \mathbb{C}^{6}$ defined by the homogeneous equation $f=x_1x_6-x_2x_5+x_3x_4$, representing the Grassmanian of planes in $4$-space.

\begin{verbatim}
n=6;
kk=QQ[x_1..x_n,y_1..y_n];
f=x_1*x_6-x_2*x_5+x_3*x_4;
I=ideal(f);
c=codim I;
Y=matrix{{x_1..x_n}}-matrix{{y_1..y_n}};
Jac= jacobian gens I;
S=submatrix(Jac,{0..n-1},{0..numgens(I)-1});
Jbar=S|transpose(Y);
EX = I + minors(c+1,Jbar);
SingX=I+minors(c,Jac);
q=sum for i from 1 to n list x_i^2;
Q=ideal(q);
EXreg=saturate(EX,SingX);
DIX=radical eliminate(toList(x_1..x_n),EXreg+Q)
\end{verbatim}
Which gives that $\mathrm{DI}(X)$ is the zero locus of the polynomial $x_1x_6-x_2x_5+x_3x_4$, so we get that the data isotropic locus is equal to the dual variety which equals the variety.
\end{example}

The next example shows that the data isotropic locus can be equal to the dual and strictly contained in $X^*+(X\cap Q)$.
\begin{example}[\bf{Cayley-Menger variety}]\label{Ex:Cayley-Menger}
Let $X$ denote the variety in $\mathbb{C}^3$ with parametric representation
\[\left\{
  \begin{array}{ll}
    x_1 = (z_1-z_2)^2,  \\
    x_2=(z_1-z_3)^2, \\
    x_3=(z_2-z_3)^2.
  \end{array}
\right.
\]
Based on \cite{AH14} and on \cite[Example 3.7]{DHOST13}, the points in $X$ record the squared distances among $3$ interacting agents with coordinates
$z_1, z_2$ and $z_3$ on the line $\mathbb{R}$. The prime ideal of $X$ is given by the determinant of the \textit{Cayley-Menger matrix}
\[
\left(
  \begin{array}{cc}
    2x_2 & x_2+x_3-x_1 \\
    x_2+x_3-x_1 & 2x_3 \\
  \end{array}
\right)
\]
So $X$ is defined by the irreducible polynomial \[f=x_1^2-2x_1x_2+x_2^2-2x_1x_3-2x_2x_3+x_3^2.\]
After running the computations one can see that the data isotropic locus equals the dual variety, which is defined by $f^*=x_1x_2+x_1x_3+x_2x_3$, see Figure $3$. And it does not equal $X^*+(Q\cap X)$ for example because the point $(1,0,0)+(0,1,i)\in X^*+(Q\cap X)$, but it does not lie on $\mathrm{DI}(X)$.
\end{example}
\begin{figure}[h]
\begin{center}
\vskip -0.3cm
\includegraphics[scale=0.7]{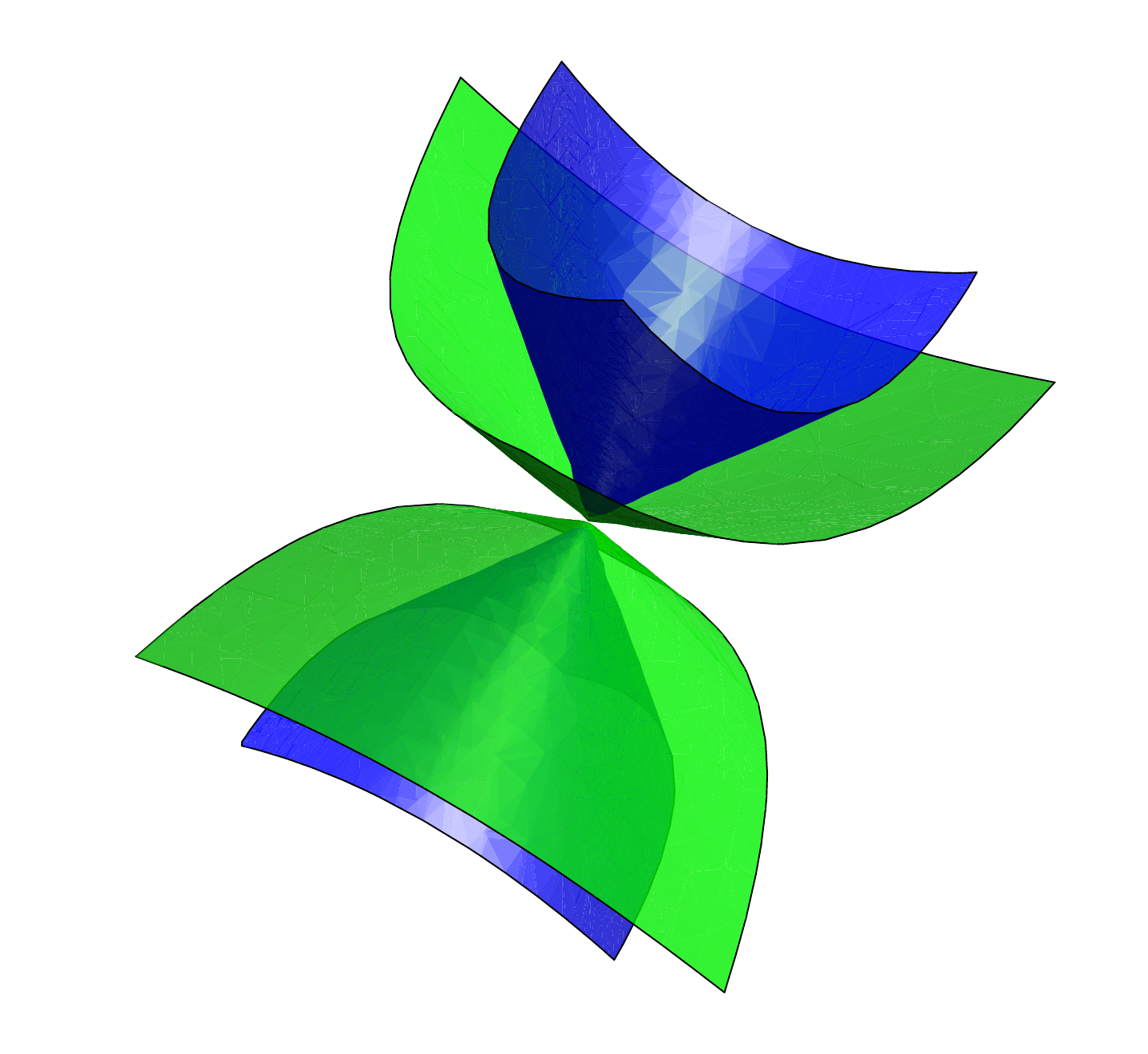}
\vskip -0.4cm
\caption{Cayley-Menger variety (in blue) together with its dual (in green).}
\end{center}
\end{figure}

The next example shows that both inclusions from Theorem~\ref{Main2} can be strict.
\begin{example}[\bf{Cayley's Cubic}]\label{Ex:Cayley cubic}
Let $X$ be defined by $f=x_1^3-x_1x_2^2-x_1x_3^2+2x_2x_3x_4-x_1x_4^2$, the $3\times 3$ symmetric determinant in $\mathbb{C}^4$. This hypersurface is sometimes called the \textit{Cayley's cubic surface} and receives much attention in the study of elliptopes and exponential varieties in algebraic statistics, see for instance \cite[Example 5.44]{RS13}, \cite[Example 1.1]{MSUZ15} and \cite{LP96}.
Its dual variety is the \textit{quartic Steiner surface} defined by $f^*=x_2^2x_3^2-2x_1x_2x_3x_4+x_2^2x_4^2+x_3^2x_4^2$. After running the computations one finds that the data isotropic locus is the union
\[
\mathrm{DI}(X)=V(x_1^{18}+4x_1^{16}x_2^2+6x_1^{14}x_2^4-...+729x_3^4x_4^{14})\cup X^*.
\]
So it is clearly not equal to the dual variety. And it is not equal to the $X^*+(Q\cap X)$ either, because for example the point
\[
(1,1,0,0)+(0,0,1,i)\in X^*+(Q\cap X),
\] but it is not in $\mathrm{DI}(X)$.
\end{example}
Our next example shows that the second inclusion in Theorem~\ref{Main2} can be equality and moreover it can give the whole space.
\begin{example}[\bf{Special essential variety}]\label{Ex:Essential}
Essential matrices play an important role in \textit{multiview geometry}, see for instance \cite{HZ03}. The connections between the ED degree theory and multiview geometry were investigated in \cite[Example 3.3]{DHOST13}. The set of essential matrices is called the \textit{essential variety} and it is defined as follows
\[
\mathcal{E}=\{X\in M_{3\times 3}| \det X=0,\ 2XX^{T}X-\text{trace}(XX^{T})X=0\}.
\]
It is a codimension $3$ variety of degree $10$. The ED degree of $\mathcal{E}$ is $6$, as was proved in \cite[Example 5.8]{DLT15}. We are interested in the data isotropic locus of this variety, but because of computational reasons we will take a linear section of it and we will only consider the symmetric, constant diagonal essential matrices , which we will call the special essential variety and will denote by $\mathcal{SE}$. More precisely we define $\mathcal{SE}$ to be
\[
\Bigg\{ X=\left(
                   \begin{array}{ccc}
                     x_1 & x_2 & x_3 \\
                     x_2 & x_1 & x_4 \\
                     x_3 & x_4 & x_1 \\
                   \end{array}
                 \right)
\Bigg | \det X=0,\ 2XX^{T}X-\text{trace}(XX^{T})X=0 \Bigg\}.
\]
Since this variety will not be irreducible we will carry out our computations componentwise. When running the computations one will find that the data isotropic locus is the hole space. Indeed one can observe that $\mathcal{SE}$ is inside the isotropic quadric $Q$, so every critical point is isotropic. We have that
\[ \mathrm{DI}(X)=X^*+(X\cap Q)=X^*+X=\mathbb{C}^4.\]
Moreover $\mathrm{DI}(X)$ is not equal to the dual variety, since $X^*$ is a proper variety defined by $f^*=(x_2^2+x_4^2)(x_2^2+x_3^2)(x_3^2+x_4^2)$. Moreover it is clear that the dual is not a component of $\mathrm{DI}(X)$.
\end{example}
In the last example the reader can see that both inclusions from Theorem~\ref{Main2} can be equalities.
\begin{example}[\bf{Line through the origin}]
In what follows let $X$ be a line through the origin in $\mathbb{C}^3$. So we have $X=V(x_1+2x_2+3x_3, 4x_1+5x_2+6x_3)$.
Then we get that $X$ intersects the quadric $Q$ only in the point $0$, so by Theorem~\ref{Main2} we get immediately, that $X^*=\mathrm{DI}(X)=X^*+\{0\}$, and the dual is the orthogonal complement of $X$, so it is defined by $x_1-2x_2+x_3$.
\end{example}

\end{document}